\documentclass[12pt,a4paper]{article}
\usepackage{a4,amssymb,amsmath,amsthm,url,color,lineno}
\usepackage{bezier,amsfonts,amssymb,graphicx,amsthm,url}
\usepackage[english]{babel}
\title{Notes on Spreads of Degrees in Graphs}
\date{}
\begin{document}
\newtheorem{theorem}{Theorem}[section]
\newtheorem{definition}{Definition}[section]
\newtheorem{proposition}[theorem]{Proposition}
\newtheorem{corollary}[theorem]{Corollary}
\newtheorem{lemma}[theorem]{Lemma}
\DeclareGraphicsExtensions{.pdf,.png,.jpg}

\author{Yair Caro \\ Department of Mathematics\\ University of Haifa-Oranim \\ Israel \and Josef  Lauri\\ Department of Mathematics \\ University of Malta
\\ Malta \and Christina Zarb \\Department of Mathematics \\University of Malta \\Malta }

\maketitle
\begin{abstract}
Perhaps the very first elementary exercise one encounters in graph theory is the result that any graph on at least two vertices must have at least two vertices with the same degree. There are various ways in which this result can be non-trivially generalised. For example, one can interpret this result as saying that in any graph $G$ on at least two vertices there is a set $B$ of at least two vertices such that the difference between the largest and the smallest degrees (in $G$) of the vertices of $B$ is zero. In this vein we make the following definition. For any $B\subset V(G)$, let the spread $sp(B)$ of $B$ be defined to be the difference between the largest and the smallest of the degrees of the vertices in $B$. For any $k\geq 0$, let $sp(G,k)$ be the largest cardinality of a set of vertices $B$ such that $sp(B)\leq k$. Therefore the first elementary result in graph theory says that, for any graph $G$ on at least two vertices, $sp(G,0)\geq 2$.

In this paper we first give a proof of a result of Erd\" os, Chen, Rousseau and Schelp which generalises the above to $sp(G,k)\geq k+2$ for any graph on at least $k+2$ vertices. Our proof is short and elementary and does not use the famous Erd\" os-Gallai Theorem on vertex degrees. We then develop lower bounds for $sp(G,k)$ in terms of the order of $G$ and its minimum, maximum and average degree. We then use these results to give lower bounds on $sp(G,k)$ for trees and maximal outerplanar graphs, most of which we show to be sharp.    
\end{abstract}

\section {Introduction}

One of the most fascinating aspects of combinatorics is that a trivial statement can be turned into a non-trivial result or even a very difficult problem by some very natural generalisation. Very often this involves the use of the pigeonhole principle. An application of this principle gives what we call the first elementary result in graph theory: any graph on at least two vertices has at least two vertices with the same degree. This result has been generalised in various directions, for example: a characterisation of those graphs which have only one repeated pair of degrees  \cite{behzad1967no}, and a characterisation of graphic sequences, that is, those sequences of positive integers which can be realised as the degree sequence of some graph  \cite{hakami1962realizability,havelremark} .
	
In this paper we consider the following generalisation of the first elementary result in graph theory, introduced in \cite{CERS}. Let $G=(V,E)$ be a graph.  For $B$ a subset of the vertex set $V$, we define the \emph{spread} of $B$ as $sp(B) = \{ \max(deg(u))-\min(deg(v)):  u,v \in B\}$, where the degrees are the degrees in graph $G$.
We then let, for an integer $k \geq 0$, $sp(G,k)$  be   $\max \{ |B| : sp(B) \leq k \}$, namely the largest cardinality of a subset of vertices of $G$ with spread at most $k$.  

The first elementary result of graph theory therefore says that, if $G$ has order at least 2, then $sp(G,0)\geq 2$. The number $sp(G,k)$ is also a generalisation of the maximum occurrence of a value in the degree sequence of a graph, as defined in \cite{caro2009repetition} and denoted by $rep(G)$, since $rep(G)=sp(G,0)$.  
  
The result $sp(G,0)\geq 2$ was extended to general spreads in \cite{CERS} where the following theorem was proved. 

\begin{theorem} [Erd\"os, Chen, Rousseau and Schelp] \label{cers}
Let $G$ be a graph on $n \geq k+2$ vertices, then $sp(G,k) \geq k+2$.
\end{theorem}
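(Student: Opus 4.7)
The plan is to sort the degrees $d_1 \leq d_2 \leq \cdots \leq d_n$ of $G$ and argue by contradiction. Since among any $k+2$ vertices the spread is smallest when they are consecutive in sorted order, assuming $sp(G,k) \leq k+1$ is equivalent to
$$d_{i+k+1} - d_i \geq k+1 \quad \text{for every } 1 \leq i \leq n-k-1.$$
Iterating this inequality along the chain $d_1, d_{k+2}, d_{2k+3}, \ldots$, with $m = \lfloor (n-1)/(k+1) \rfloor$ and $r = (n-1) - m(k+1) \in [0,k]$, yields $d_n - d_1 \geq m(k+1) = n-1-r$. Running the same chain from each starting index $j \in \{1, \ldots, r+1\}$ gives $d_j \leq r$, and the complement symmetry $sp(\bar G, k) = sp(G,k)$, which follows from $d_{\bar G}(v) = n-1-d_G(v)$, dually gives $d_{n+1-j} \geq n-1-r$.

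When $r = 0$, this forces $d_1 = 0$ and $d_n = n-1$ simultaneously, contradicting the classical first elementary fact that no graph can contain both an isolated and a universal vertex. When $r \geq 1$, I would proceed by induction on $n$: the base case $n = k+2$ is immediate from the classical fact since the degree set then lies inside either $[0,k]$ or $[1,k+1]$, each a window of width $k$. For the induction step on $n > k+2$, removing an isolated vertex (if $\delta = 0$) preserves the spread behaviour on the remaining $n-1 \geq k+2$ vertices, while removing a universal vertex (if $\Delta = n-1$) merely shifts every remaining degree uniformly by $-1$; the inductive hypothesis directly applies in both cases.

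The main obstacle is the remaining sub-case $1 \leq \delta \leq \Delta \leq n-2$, in which the bounds sharpen to $d_j \leq r-1$ for $j \leq r+1$ and $d_{n+1-j} \geq n-r$. Letting $A$ and $B$ denote the bottom and top $r+1$ vertices respectively, I would bound the number of edges $e(A,B)$ between them in two ways: each $v \in A$ has degree at most $r-1$ and so contributes at most $r-1$ edges to $B$, while each $u \in B$ has at most $r-1$ non-neighbours in $V\setminus\{u\}$ and is therefore adjacent to at least $|A|-(r-1)=2$ vertices of $A$, giving
$$2(r+1) \leq e(A,B) \leq (r-1)(r+1).$$
This is already a contradiction for $r \leq 2$. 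For $r \geq 3$ the bounds are met only when every $v \in A$ has degree exactly $r-1$ with both neighbours in $B$ and every $u \in B$ has degree exactly $n-r$ with all its non-neighbours inside $A$; substituting these forced values back into the chain inequality either gives a direct arithmetic contradiction for small $n$, or leaves an induced subgraph on $V \setminus (A \cup B)$ that inherits the same spread hypothesis on fewer than $n$ vertices, at which point the inductive hypothesis closes the case. The handful of boundary values of $n$ in which the middle set has between one and $k+1$ vertices need to be dispatched by combining the middle with suitable vertices of $B$, whose degrees already lie within $k$ of those in the middle.
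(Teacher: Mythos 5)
Your setup (sorting degrees, deriving $d_{i+k+1}-d_i\geq k+1$ from the assumption $sp(G,k)\leq k+1$, and then counting edges between the bottom and top blocks) starts exactly like the paper's argument, but the counting step loses the information that makes the paper's proof close. You replace the degrees of the extreme vertices by the crude worst-case bounds $d_j\leq r-1$ (bottom) and $d_{n+1-j}\geq n-r$ (top), which yields only $2(r+1)\leq e(A,B)\leq (r-1)(r+1)$; this is a contradiction for $r\leq 2$ but is simply a satisfiable inequality for $r\geq 3$ (so the whole case $k\geq 3$ is open). Your claimed rescue for $r\geq 3$ does not work as stated: equality of the two bounds, and hence your ``forced'' extremal structure, occurs only at $r=3$, while for $r\geq 4$ there is slack and nothing is forced. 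Moreover, the proposed induction on the induced subgraph $G[V\setminus(A\cup B)]$ is not legitimate: the spread hypothesis concerns degrees \emph{in $G$}, and the middle vertices lose different numbers of edges to $A\cup B$, so the induced subgraph does not inherit the hypothesis; in addition the middle set can have fewer than $k+2$ vertices (e.g.\ when $m=1$ it has only $k-r$ of them), so the inductive statement cannot even be invoked. The final sentence about ``combining the middle with suitable vertices of $B$'' is not an argument.

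The repair is precisely the refinement the paper uses, and it makes the isolated/universal-vertex reductions and the induction unnecessary. Keep the chain inequality in its \emph{pairwise} form: with $n=m(k+1)+r$, $1\leq r\leq k+1$, one has $\deg(u_j)=d_{m(k+1)+j}\geq d_j+m(k+1)$ for $j=1,\dots,r$. Each top vertex $u_j$ has at most $n-r-1=m(k+1)-1$ possible neighbours outside $\{v_1,\dots,v_r\}$, hence must send at least $\deg(u_j)-m(k+1)+1\geq d_j+1$ edges into the bottom block, so the bipartite graph $H$ between the two blocks satisfies
\[
e(H)\;\geq\;\sum_{j=1}^{r}\bigl(d_j+1\bigr)\;>\;\sum_{j=1}^{r}d_j\;\geq\;e(H),
\]
a contradiction for every residue $r$ at once. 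In short: your estimate discards the coupling $\deg(u_j)\geq d_j+m(k+1)$ between the $j$-th smallest and the $j$-th largest degrees, and without it the double count is too weak; with it, the proof is complete in a few lines.
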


In this paper, in Section \ref{sec:generalbounds},  we give a short and elementary proof  of Theorem \ref{cers} avoiding the use of the Erd\"os-Gallai theorem. 
Then, in the same section, we develop a lower bound for $Sp(G,k)$  in terms of the parameters $n$, $\delta$, $d$, $\Delta$,  which are respectively the number of vertices, the minimum degree, the average degree and the maximum degree of the graph $G$.  Doing so we generalize a basic lemma and technique introduced in \cite{caro2009repetition}.

Then in Section \ref{families}, we consider the sharpness of the lower bounds obtained in Section \ref{sec:generalbounds} attained by trees and maximal outer-planar graphs (abbreviated to MOPs). We conclude in Section \ref{sec:conclusion} with some concluding remarks and open problems.

\section{Bounds for $sp(G,k)$} \label{sec:generalbounds}

The proof given in \cite{CERS} of Theorem \ref{cers} 
uses the celebrated Erd\" os-Gallai characterization of graphic sequences \cite{Erdos1960graphs}.  Here we give a very short and elementary (avoiding Erd\" os-Gallai theorem) alternative proof. 

\medskip\noindent
\emph{Proof of Theorem \ref{cers}.}

Suppose, on the contrary,that $G$ is a graph with $n = m( k+1)+ r$ vertices, $m \geq 1$,  $1 \leq r \leq k+1$ with $sp(G,k) \leq k+1$.
Let the vertices of $G$ be $\{v_1,\ldots,v_{m(k+1)}, v_{m(k+1) +1} = u_1,\ldots,,v_{m(k+1) +r} = u_r \}$.  By assumption on $G$, $deg(v_{ j +k +1}) \geq deg(v_j ) +k +1$ for $j = 1,\ldots, n - k - 1$, as each interval has $k+2$ vertices and we assumed that $spG) \leq k+1$.  Hence in particular, \[deg( u_j) = deg( v_{m(k+1) +j} ) \geq m(k+1) +deg(v_j) \mbox{                  (1)}\] for $j = 1,..,r$.

How many vertices among $v_1,\ldots,v_r$ can $u_1,\ldots,u_r$ be adjacent to?

Clearly each $u_ j$ can be adjacent among $v_{r +1},\ldots,v_n$ to at most $n - r -1$ vertices and \[n-r-1=m(k+1)+r-r-1=m(k+1)-1 \mbox{       (2)}.\]  Hence, using (1) and (2), $u_j$  is adjacent to at least $d(u_j) - m(k+1) +1 \geq m(k+1)) +deg(v_j) - m(k+1) +1 = d( v_j) +1$ vertices among $v_1,\ldots,v_r$.  But then consider the bipartite graph $H$ with $v_1,\ldots,v_r$ on one side and $u_1,\ldots,u_r$ on the other side.  Clearly, if $deg_H(u_j)$ denotes the degree of vertex $u_j$ in $H$, we obtain \[\sum_{j=1}^{j=r}deg(v_j)  \geq e( H) \geq \sum_{j=1}^{j=r}deg_H(u_j) \geq\sum_{j=1}^{j=r}deg(v_j) +1,\] a contradiction .
\hfill $\square$

Before stating our main results in this section , we observe that $sp(G,k)=sp(\overline{G},k)$ since for any two vertices $u, v \in V(G)$,\[deg_{\overline{G}}(u)-deg_{\overline{G}}(v)=(n-1-deg_G(u))-(n-1-deg_G(v))=deg_G(v)-deg_G(u).\]
\begin{theorem} \label{lbsp}
Let $G$ be a graph on n vertices average degree $d$, minimum degree $\delta$ and maximum degree $\Delta$.  
Then:
\begin{enumerate}
\item{$sp(G,k) \geq \max\{\frac{n(k+1)}{2d - 2\delta +k+1},\frac{n(k+1)}{2\Delta - 2d +k+1}\}$.}
\item{$sp(G,k) \leq (k+1)sp(G,0)$.}
\end{enumerate}
\end{theorem}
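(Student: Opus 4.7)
The plan has two separable parts. Part 2 is a one-line pigeonhole; Part 1 splits further into an averaging argument for the first fraction and a complementation trick for the second, using the identity $sp(G,k) = sp(\overline{G},k)$ recorded just above the theorem statement.

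For Part 2, I would let $B$ realise $sp(G,k) = |B|$. Since $sp(B)\leq k$, the degrees on $B$ take at most $k+1$ distinct values, so by pigeonhole some value is attained by at least $|B|/(k+1)$ vertices, giving $sp(G,0) \geq sp(G,k)/(k+1)$, which is Part 2.

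For the first fraction in Part 1 I would partition by degree. For $j\geq 0$ let $I_j = [\delta + j(k+1),\,\delta + (j+1)(k+1)-1]$ and let $n_j$ be the number of vertices with degree in $I_j$. Each $I_j$ has width $k$, so the corresponding vertex set has spread at most $k$, and setting $T := sp(G,k)$ yields $n_j \leq T$ for every $j$. Two constraints then drive the argument: $\sum_j n_j = n$, and
\[
nd \;=\; \sum_{v\in V} \deg(v) \;\geq\; \sum_j n_j\bigl(\delta + j(k+1)\bigr),
\]
which rearranges to $\sum_j j\, n_j \leq n(d-\delta)/(k+1)$. The crux is to use the caps $n_j \leq T$ in the opposite direction, as a \emph{lower} bound on $\sum_j j\, n_j$: to minimise that sum one loads the smallest indices first, and a short computation gives $\sum_j j\, n_j \geq n(n-T)/(2T)$. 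Combining this lower bound with the upper bound above and isolating $T$ yields exactly $T \geq n(k+1)/(2d - 2\delta + k+1)$. The second fraction is immediate by applying the first fraction to $\overline{G}$, whose average and minimum degrees are $n-1-d$ and $n-1-\Delta$, so the denominator $2(n-1-d) - 2(n-1-\Delta) + k+1$ simplifies to $2\Delta - 2d + k + 1$, and the identity $sp(\overline{G},k) = sp(G,k)$ transfers the bound back to $G$.

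The only step needing genuine care is the extremal inequality $\sum_j j\, n_j \geq n(n-T)/(2T)$ when $n/T$ is not an integer. Writing $n = sT + r$ with $0 \leq r < T$, the minimising distribution has $n_0 = \cdots = n_{s-1} = T$ and $n_s = r$, and a direct calculation shows the inequality holds with slack $r(T-r)/(2T)\geq 0$; so no rounding needs to be carried through the final algebra and the stated bound follows cleanly.
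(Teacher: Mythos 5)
Your proposal is correct and follows essentially the same route as the paper: partition the degree range into blocks of $k+1$ consecutive values, observe that each block holds at most $sp(G,k)$ vertices, bound the degree sum via the extremal (greedily loaded) configuration, and obtain the second fraction by passing to $\overline{G}$; Part 2 is the same pigeonhole observation. Your reformulation via the two-sided bounds on $\sum_j j\,n_j$ is a cleaner packaging of the paper's long algebraic chain, and your slack term $r(T-r)/(2T)$ is exactly the paper's term $b(r-b)(k+1)/(2r)$ in its notation.
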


\begin{proof}

Let $r = sp(G,k)$ and set $n = rt +b$, where $0 \leq b \leq r-1$, and consider the intervals \[I_1 =
[\delta,\delta+k], I_2 = [\delta+k+1 , \delta+2k+1],\ldots,\]\[ I_t = [\delta+(t-1)(k+1) , \delta+t(k+1) - 1], I_b = [ \delta +t(k+1),\ldots,n-1].\]

Each interval $I_ j$ contains at most $r$ vertices from $V(G)$ for otherwise $sp(G,k) \geq r+1$.  There are $t$ such intervals containing at most $rt$ vertices altogether and at least $b$ elements from the interval $I_b$ so that the total number of vertices is $rt +b = n$.

The smallest degree sum is achieved when we take exactly $r$ elements in each interval $I_ j$
with value $\delta + (j-1)(k+1)$ and the $b$ elements in $I_b$ equals $\delta +t(k+1)$, so that the total sum of degrees is 
\begin{align*}
2e(G) &= dn\\
& \geq r [\delta + (\delta+k+1) +\ldots+ (\delta +(t-1)(k+1)) + b(\delta+t(k+1))]\\
&= \frac{2rt\delta
+rt(t-1)(k+1)}{2} + b(\delta+ t(k+1))\\
&=  \frac{rt[2\delta + (t-1)(k+1)]}{2} + b(\delta+ t(k+1))\\
& =  \frac{(n-b)[2\delta + (t-1)(k+1)]}{2} + b(\delta+ t(k+1))\\
& =n\delta + \frac{n(t-1)(k+1)}{2} -b\delta - \frac{b(t-1)(k+1)}{2} +b\delta + bt(k+1)\\
& =n\delta + \frac{n(n-b)}{r -1)(k+1)} - \frac{bt(k+1)}{2} +\frac{b(k+1)}{2} + bt(k+1)\\
& = n\delta +  \frac{n(k+1)(\frac{n}{r} -1)}{2} - \frac{nb(k+1)}{2r} +
\frac{bt(k+1)}{2} + \frac{b(k+1)}{2} \\
&= n\delta + - \frac{nb(k+1)}{2r} +
\frac{rbt(k+1)}{2r} + \frac{rb(k+1)}{2r} \\
&= n\delta +  \frac{n(k+1)(\frac{n}{r} -1)}{2}- \frac{nb(k+1)}{2r}
+ \frac{(n-b)b(k+1)}{2r} + \frac{rb(k+1)}{2r}\\
& =n\delta +   \frac{n(k+1)(\frac{n}{r} -1)}{2}- \frac{b^2(k+1)}{2r} + \frac{rb(k+1)}{2r}\\
& = n\delta +   \frac{n(k+1)(\frac{n}{r} -1)}{2} + \frac{b(r-b)(k+1)}{2r} \mbox{   (3)}\\
&\geq n\delta +
 \frac{n(k+1)(\frac{n}{r} -1)}{2}
\end{align*}
  taking $b=0$ in (3).  Hence $dn \geq n\delta + \frac{n(k+1)(\frac{n}{r} -1)}{2}$ which after rearranging gives $r \geq \frac{n(k+1)}{2d- 2\delta +k+1}$, the first expression.

Also since $sp(G,k) = sp(\overline{G},k)$ and using $\overline{d}=n-1-d$ and $\overline{\delta}=n-1-\Delta$, we get 
\[sp(G,k)=sp(\overline{G},k) \geq\frac{n(k+1)}{2\Delta - 2d +k+1}.\]

2.  For $sp(G,k)$, the spread is determined by a set of vertices with degrees $p, p+1, \ldots,p+k$ respectively.  Let $S_i$ be the set of vertices of degree $i$ in this set.  Then $0 \leq |S_i| \leq sp(G,0)$, hence $sp(G,k) \leq (k+1)sp(G,0)$.
\end{proof}

\textbf{Remark:}  Observe that in equation (3) we used $b = 0$,  but if we substitute $b =n-rt$,  then after some further algebra  we get

\[r \geq \frac{2n(\delta - d +t(k+1) ) }{ t(t+1)(k+1)}  \mbox{     (4)}.\]        

This will  prove useful once we have a lower bound $r^*$ on $r$ using Theorem \ref{lbsp}  and an upper bound $t^*$ on $t$ since from  $n = rt+b$ we get $n \geq r^*t +b$  hence  $\frac{n -b}{r^*}= t^*\geq t$.

Clearly $r$ is at least the minimum in equation (4) over all $t$ such that $1 \leq t \leq t^*$.

We shall use this remark several times in section \ref{families}.

\section{Realisation of the lower bounds in certain families of graphs.} \label{families}

The characterisation of graphic sequences in general given in \cite{Erdos1960graphs,hakami1962realizability,havelremark} is too wide to force restrictions on the degree sequence so that the bounds of the Theorem are attained. It is therefore interesting to investigate classes of graphs whose structure imposes such restrictions. In this section
we show that trees and maximal outerplanar graphs come very close to having this required structure: for both classes, their average degree d which appears in the bound of Theorem \ref{lbsp}, is known in terms of the number $n$ of vertices, and their structure forces severe restrictions on the possible degrees which their vertices can have.



\subsection{Trees}

\begin{theorem}
Let $k \geq 0$ and $T$ be a tree on $n \geq k+2$ vertices.  Then 
\begin{enumerate}
\item{$sp(T,0) = rep(T) \geq \lceil \frac{n}{3} \rceil$ which is sharp for $n =1\pmod 3$.} 
\item{For $k \geq 1$, $sp(T,k) \geq \frac{nk+2}{k+1}$ and this is sharp.}
\end{enumerate}
\end{theorem}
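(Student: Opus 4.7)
The two parts call for different techniques. For Part~1 I would specialise Theorem~\ref{lbsp}; for Part~2 that bound is not quite strong enough once $k\ge 2$, so I would argue directly using the degree sum.

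For Part~1, a tree on $n$ vertices has $\delta=1$ and $d=2(n-1)/n$, so $2d-2\delta+1=(3n-4)/n$ and Theorem~\ref{lbsp} (with $k=0$) gives
\[
sp(T,0)\ \ge\ \frac{n^2}{3n-4}.
\]
The identity $\tfrac{n^2}{3n-4}-\tfrac{n}{3}=\tfrac{4n}{3(3n-4)}>0$ together with the integrality of $sp(T,0)$ gets us most of the way; to finish, I would verify in each of the three residue classes $n\bmod 3$ that $n^2/(3n-4)$ strictly exceeds $\lceil n/3\rceil-1$ (three one-line algebraic checks), which forces the integer $sp(T,0)$ to be at least $\lceil n/3\rceil$.

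For Part~2, I would let $B$ be the set of vertices of $T$ of degree at most $k+1$. Since $\delta(T)=1$, the set $B$ has spread at most $k$ and hence $sp(T,k)\ge|B|$. Every vertex outside $B$ has degree at least $k+2$, whence
\[
2(n-1)\ =\ \sum_{v\in V(T)}deg(v)\ \ge\ |B|+(k+2)(n-|B|)\ =\ (k+2)n-(k+1)|B|,
\]
which rearranges to $|B|\ge(nk+2)/(k+1)$, as required.

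For sharpness, in Part~1 with $n=3m+1$ I would exhibit any tree whose degree sequence is $(1^{m+1},2^{m+1},3^{m-1})$ --- the degrees are positive integers summing to $6m=2(n-1)$, so such a tree exists (any positive integer sequence with the correct sum is tree-realisable), and its largest degree-multiplicity is $m+1=\lceil n/3\rceil$. In Part~2 with $n=q(k+1)+2$ I would take the caterpillar whose spine is a path $v_1v_2\cdots v_q$ carrying $k+1$ pendant leaves at each of $v_1,v_q$ and $k$ pendant leaves at each interior spine vertex (collapsing to $K_{1,k+2}$ when $q=1$): every spine vertex then has degree exactly $k+2$, there are $qk+2$ leaves of degree $1$, and since the only degrees that occur are $1$ and $k+2$ no length-$(k+1)$ window of degrees contains more than $qk+2$ vertices, forcing $sp(T,k)=qk+2=(nk+2)/(k+1)$. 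The only potentially fiddly step is the real-to-integer passage in Part~1, which is dispatched by the short residue-class check; everything else is a one- or two-line verification.
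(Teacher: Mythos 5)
Your proposal is correct, and it reaches both bounds by a route that is genuinely more self-contained than the paper's. For Part~1 the paper simply cites the $rep(T)\geq\lceil n/3\rceil$ bound from earlier work, whereas you rederive it from Theorem~\ref{lbsp}: since $n^2/(3n-4)>n/3>\lceil n/3\rceil-1$ in every residue class, the integrality step goes through (in fact monotonicity of the ceiling already gives $\lceil n^2/(3n-4)\rceil\geq\lceil n/3\rceil$ without a case split). For Part~2 the paper first applies Theorem~\ref{lbsp} with $k=1$ to force $t=1$ in $n=rt+b$, appeals to the monotonicity $sp(T,k+1)\geq sp(T,k)$ to keep $t=1$ for all $k\geq1$, and then plugs $t=1$ into the refined bound (4); your argument replaces all of this with a two-line handshake computation on the set $B$ of vertices of degree at most $k+1$, using only $\delta(T)=1$ to see that $sp(B)\leq k$. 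This is more elementary, avoids the slightly delicate $t=1$ reduction entirely, and makes it transparent that equality forces degrees $1$ and $k+2$ only --- which is exactly the paper's sharpness analysis via the vertex- and edge-counting equations $n_1+n_{k+2}=n$ and $n_1+(k+2)n_{k+2}=2n-2$. Your explicit witnesses are also slightly stronger: the caterpillar with $k+1$ leaves at the spine ends and $k$ at interior spine vertices realises $(nk+2)/(k+1)$ exactly for every $n\equiv 2\pmod{k+1}$, and the tree-realisable sequence $(1^{m+1},2^{m+1},3^{m-1})$ covers all $n\equiv1\pmod3$, whereas the paper's Part~1 construction only handles $n=9k-2$. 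The one tacit restriction you share with the paper is that Part~2 sharpness can only be meant for those $n$ with $k+1\mid n-2$, since otherwise $(nk+2)/(k+1)$ is not an integer; this is not a gap relative to the paper's own claim.
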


\begin{proof}

1.  The case $k = 0$  is from \cite{caro2009repetition} and sharpness for $n =1\pmod 3$ is achieved by a tree made up of a path on $3k+2$ vertices, with a path of two edges attached to the vertices $v_3 \ldots v_{3k}$ to give a tree $T$ on $3k+2+2(3k-2) = 9k-2$ vertices.  This gives $3k$ vertices of degree 1, $3k$ vertices of degree 2 and $3k-2$ vertices of degree 3, and hence $sp(T,0)=3k=\frac{n+2}{3} $.

1.  For a tree, $\delta = 1$ and   $d = \frac{2(n-1)}{n}$, and  substituting into  Theorem \ref{lbsp} with $k=1$ gives  \[\frac{2n}{ 4 - \frac{4}{n }- 2 +2} = \frac{2n^2}{4(n-1)} = \frac{n^2}{2(n-1)}> \frac{n+1}{2}.\] Hence in $n=rt+b$ we just have $t=1$ otherwise $rt>n$.  Furthermore since $sp(T,k+1) \geq sp(T,k)$ we get that for all $k \geq 1$ we may assume $t=1$.

For trees and $k \geq 1$  the lower bound (4) (with $t = 1$)  gives \[r \geq \frac{2n(\delta - d +t(k+1))}{t(t+1)(k+1)} = \frac{2n(1  2 +2/n +k +1)}{2(k+1)} = \frac{n(k +\frac{2}{n})}{k+1}= \frac{nk+2}{k+1}.\] 
This is sharp for every $k \geq 1$, as can be seen with trees having degrees only 1 and $k+2$ using the following following equations with $n_ j$ being the number of vertices of degree $j$:
\begin{enumerate}
\item{Vertex counting: $n_1 + n_{k+2} = n$}
\item{Edge counting: $ n_1 +(k+2)n_{k+2} = 2n -2$}
\end{enumerate}

Then solving for $n_1$ we get $n_1 = \frac{nk+2}{k+1}  =  sp(T ,k)$ as required.
\end{proof}


\subsection{Maximal Outerplanar Graphs}

We now consider  maximal outerplanar graphs.  In general, for a maximal outerplanar graph $G$ on $n$ vertices,  bound (1) gives \[sp(G,k) \geq \frac{(k+1)n}{\frac{4(2n-3)}{n}-4+k+1}  \geq  \frac{(k+1)n}{\frac{(5+k)n-12}{n}} \geq  \frac{(k+1)n^2}{(5+k)n-12} > \frac{(k+1)n}{5+k}.\] 

We define \[MOP(n,k) = \min \{ sp(G,k): \mbox{ where $G$ ranges over all maximal outer-planar graphs on $n$ vertices}\}.\]  We prove the following results.

\begin{theorem}

\begin{enumerate}
For maximal outerplanar graphs
\item{$MOP(n,0)  > \frac{n}{5}$.}
\item{$MOP(n,1) \geq \frac{n}{3}+1$.}
\item{$\frac{5n+19}{11} \geq MOP(n,2) \geq \frac{4n}{9}$.}
\item{For $k \geq 3$, $MOP(n,k) \geq \frac{(k-2)n}{k-1}$.}
\end{enumerate}

Bounds 1 and 2  are sharp up to small additive constants.
\end{theorem}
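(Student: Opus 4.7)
The plan is to specialise Theorem \ref{lbsp}(1) and the subsequent Remark to maximal outerplanar graphs using the two identities that hold for every MOP: $|E(G)| = 2n-3$, so $d = 4 - 6/n$, and $\delta = 2$. In particular $\delta - d = -2 + 6/n$, and this single quantity is all that needs to be substituted into every lower-bound formula.

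Part 1 follows immediately from Theorem \ref{lbsp}(1) at $k = 0$: the substitution gives $sp(G,0) \geq n^2/(5n-12)$, which exceeds $n/5$ since $5n - 12 < 5n$. For Parts 2 and 3 the raw bound from Theorem \ref{lbsp}(1) is just a shade too weak, so I would invoke equation (4) of the Remark with $t = 2$: substituting $\delta - d = -2 + 6/n$ yields $sp(G,1) \geq (n+3)/3 = n/3+1$ and $sp(G,2) \geq (8n+12)/18 > 4n/9$ respectively. The step that needs care is legitimising the choice $t=2$: the case $t=1$ forces $r > n/2$, already beating both targets, and for every $t \geq 3$ the lower bound coming out of (4) contradicts the defining range $r \leq n/t$, so those $t$-values are impossible. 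The upper bound $(5n+19)/11$ in Part 3 I would handle by an explicit construction, taking a short MOP whose degree sequence is as flat as possible across three consecutive degree values and concatenating copies of it, with the $+19/11$ coming from the endpiece adjustments.

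For Part 4 at $k=3$, direct degree-sum counting already suffices: vertices of degree $\geq 6$ number at most $(2n-6)/4$, so the window $[2,5]$ captures at least $(n+3)/2 \geq n/2$ vertices. The main obstacle is Part 4 for $k \geq 4$, where the same argument only produces $((k-1)n+6)/(k+1)$, which drops below $(k-2)n/(k-1)$ once $n > 6(k-1)/(k-3)$. To recover the claim I would combine the MOP-specific identity $2n_2 + n_3 = 6 + \sum_{i \geq 5}(i-4)n_i$ (a consequence of $\sum (4 - \deg(v)) = 6$) with a case split on whether $n_2$ is small or large, using the window $[2, k+2]$ in the former case and the shifted window $[3, k+3]$ in the latter, so that the additional degree-$2$ vertices force enough high-degree vertices to concentrate in a small set.

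For the sharpness of Parts 1 and 2, the plan is to construct MOPs whose degree sequence is dominated by a few widely-separated values: for Part 2, roughly $n/3$ vertices each of degrees $2$, $4$ and $6$ exactly balances $\sum \deg(v) = 4n - 6$ and forces every window of size $2$ to contain at most $n/3 + O(1)$ vertices; an analogous five-value distribution matches Part 1 up to the claimed additive constant. Realisability of such sequences as MOPs can be certified by an explicit fan-and-zigzag assembly, mirroring the construction used earlier for trees.
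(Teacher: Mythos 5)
Your handling of Parts 1--3 (lower bounds) matches the paper's: Part 1 is Theorem \ref{lbsp}(1) with $d=4-6/n$, $\delta=2$, and Parts 2 and 3 come from equation (4) with $t=2$; your justification that $t\leq 2$ (via $r>n/3$, resp.\ $r>3n/7$, forcing $rt\leq n$ to fail for $t\geq 3$, and $t=1$ giving $r\geq(n+1)/2$ outright) is actually spelled out more carefully than in the paper. Your sharpness sketches for Parts 1 and 2 and the upper-bound construction for Part 3 are, however, only declarations of intent: the paper supplies explicit MOPs with computed degree sequences (e.g.\ for Part 2, $p-1$ vertices of degree $2$, four of degree $3$, $p-3$ of degree $4$ and $p-2$ of degree $6$ on $n=3p-2$ vertices, giving $sp(G,1)=(n+11)/3$), and realisability as a MOP is exactly the part that cannot be waved away.

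The genuine gap is Part 4 for $k\geq 4$. You correctly diagnose that degree-sum counting alone gives only $((k-1)n+6)/(k+1)$, but your proposed repair cannot work, because it still uses nothing beyond the two counting identities $\sum n_i=n$ and $\sum i\,n_i=4n-6$ (your identity $2n_2+n_3=6+\sum_{i\geq5}(i-4)n_i$ is just a rearrangement of these). There are degree sequences consistent with both identities, and with $\delta=2$, for which \emph{every} window of $k+1$ consecutive degrees contains fewer than $(k-2)n/(k-1)$ vertices. For $k=4$ take $a=0.4n+6$ vertices of degree $2$, $b=0.25n-6$ of degree $3$ and $c=0.35n$ of degree $7$: then $a+b+c=n$ and $2a+3b+7c=4n-6$, yet the best windows give $a+b=0.65n$, $b+c=0.6n-6$ and $c=0.35n$, all below the target $2n/3$. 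So no case split on $n_2$ with shifted windows can close the argument; one needs a \emph{structural} bound on how many high-degree vertices a MOP can actually have. This is precisely what the paper imports: the result of \cite{jao2012vertex} giving $\beta_{k+3}(n)\leq\lfloor(n-6)/(k-1)\rfloor$ for the number of vertices of degree at least $k+3$ (which in particular rules out $0.35n$ vertices of degree $7$, since the true maximum is about $n/3$), after which the single window $[2,k+2]$ immediately yields $MOP(n,k)\geq n-\lfloor(n-6)/(k-1)\rfloor\geq(k-2)n/(k-1)$, with the graphs $F^t_k$ showing this is tight. Your $k=3$ computation is fine and is the one value of $k$ where counting alone suffices, but the theorem's range is all $k\geq3$.
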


\begin{proof}

$\mbox{  }$\\

\noindent 1.  $MOP(n,0)   = \min \{ rep(G)  : \mbox{where $G$ is a maximal outer-planar graph on $n$ vertices} \}$.  Bound (1) gives $MOP(n,0) > \frac{n}{5}$ which is the same as the lower bound given for $rep(G)$ in \cite{caro2009repetition}.  The construction given in  \cite{caro2009repetition} gives $rep(G)=\frac{n-4}{5}+2=\frac{n}{5}+\frac{6}{5}$ when $n =4 \pmod{10}$.

\bigskip

\noindent 2.    For $k=1$, the above observation gives $MOP(n,1) >  \frac{n}{3}.$  Hence we may use $t = 1 ,2$  and for $t = 2$  and $k = 1$ we get using bound ( 4 )    \[MOP(n,1) \geq \frac{2n(2 - \frac{4n -6}{n} +4 )}{12} = \frac{n(2n +6)}{6n} = \frac{2n+6}{6} = \frac{n}{3}+1.\]

The following construction  realises this bound up to a constant.

Arrange three sets of vertices $U = \{ u_1,\ldots,u_{p-1}\}$, $V = \{ v_1,\ldots,v_p\}$ and $W = \{
w_1,\ldots,w_{p-1}\}$.  $U$ will be the upper vertices, $V$ will be in the middle vertices and $W$ the bottom ones.

Let $u_i$ be connected to $v_i$ and $v_{i+1}$; let  $w_i$ be connected to $v_i$ and $v_{i+1}$ and to $w_{i-1}$ and $w_{i+1}$, except $w_1$ which is  only connected to $w_2$, and $w_{p-1}$ which is only connected to $w_{p-2}$.  Let $v_i$ be also connected  to $v_{i-1}$ and $v_{i+1}$ (except the first and the last).  Figure \ref{MOP1fig} shows an example of  this construction.

\begin{figure}[h] 
\centering
\includegraphics{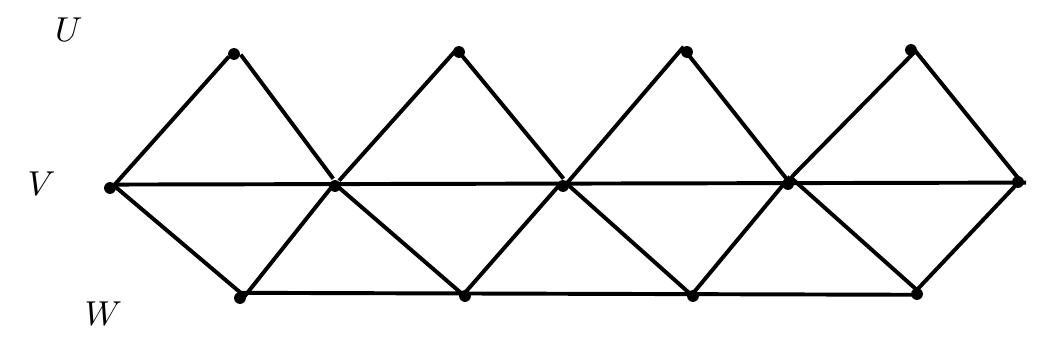} 
\caption{The above construction for $p=5$} \label{MOP1fig}

\end{figure}

This is a maximal outerplanar graph with $p-1$ vertices of degree 2, four  vertices of degree 3, $p-3$
vertices of degree 4 and $p-2$ vertices of degree 6.  So we have $3p-2$ vertices and $sp(G,1) = p+3 = \frac{(3p-2) + 11}{3}=\frac{n+11}{3}$,  which differs from the lower bound by $\frac{8}{3}$.

\bigskip

\noindent 3.  For $MOP(n,2)$, bound (1) gives $MOP(n,2) \geq \frac{3n}{7}$ while bound (4) (with t=2) gives $MOP(n,2) \geq \frac{4n}{9}$.  We shall present a construction showing the bound $\frac{5n+19}{11}$ later on.

\bigskip

\noindent 4.    In \cite{jao2012vertex}, the authors define $\beta_k(n)$ to be the maximum number of vertices of degree at least $k$ amongst all maximal planar graph of order $n$.   They show that for $k \geq 6$ and $n \geq k+2$, \[\beta_k(n) \geq \left\lfloor \frac{n-6}{k-4} \right\rfloor.\]

Since $\delta=2$ for any maximal outerplanar graph, it follows that \[MOP(n,k) \geq n - \beta_{k+3}(n) \geq n -  \left\lfloor \frac{n-6}{k-1} \right\rfloor \geq \frac{n(k-2)+6}{k-1} \geq \frac{(k-2)n}{k-1}.\]

 Let $B$ the graph in Figure \ref{FigB}.  

\begin{figure}[h] 
\centering
\includegraphics{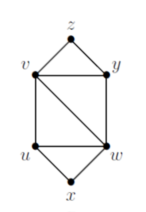} 
\caption{The graph B} \label{FigB}

\end{figure}

The graph $B'$ is obtained by replacing the edges $ux$ and $yz$ by  paths $P$ and $Q$ respectively, containing $k-3$ internal vertices each.  The vertex $v$ is joined to every vertex on $P$ and the vertex $w$ is joined to every vertex in $Q$. We then create the graph $F^t_k$ by taking the union of  $t$ copies of the graph $B'$.  Figure \ref{FigB2} shows and example with $k=4$ and $t=3$.  The graph $F^t_k$ has $n= 2t(k-1)+6$ vertices.  Such graphs have $2t+2$ vertices of degrees 2 and  $2t(k-3)+2$ vertices of degree 3, two vertices of degree 4 and $2t$ vertices of degree $k+3$.  This gives \[MOP(n,k)=2t(k-2)+6 = 2(k-2)\left(\frac{n-6}{2(k-1)} \right)+6= \frac{(k-2)n+6}{k-1}.\] 

\begin{figure}[h!] 
\centering
\includegraphics{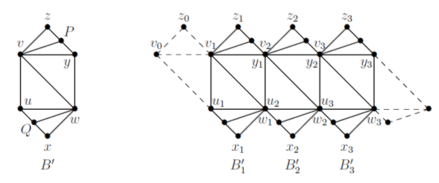} 
\caption{The graph $F^3_4$} \label{FigB2}

\end{figure}

\end{proof}

\newpage
The following construction shows $\frac{5n+19}{11} \geq MOP(n,2) $ for n$ = 5 \pmod{11}$,  and hence  for other values of $n \pmod{11}$ can be completed by adding at most 10 vertices.  Hence $MOP(n,2)  \geq  \frac{5n}{11} + c(n,11)$  where $c(n,11)$  is a constant which depends on $n \pmod{11}$.

Consider a path $V=v_1,v_2,\ldots, v_{5p+3}$, a path $u_1,\ldots, u_p$ above it and the vertices $w_1,\ldots,w_{5p+2}$ below the path.  Let $u_1$ be adjacent to $v_1$ to $v_7$, and $u_p$ adjacent to $v_{5p-3}$ to $v_{5p+3}$, while for $1 \leq i \leq p-1$, $u_i$ is adjacent to $v_{5i-3}$ to $v_{5i+2}$. Vertex $w_j$ for $1 \leq j \leq 5p+2$ is adjacent to $v_j$ and $v_{j+1}$.  This gives a total of $n=11p+5$ vertices: $p$ vertices of degree 8, $p-1$ vertices of degree 6, $4p+2$ vertices of degree 5, 2 vertices of degree 3 and $5p+2$ vertices of degree 2.  Thus $Sp(n,2)=5p+4=\frac{5(n-5)}{11} +4=\frac{5n+19}{11}$.  Figure \ref{MOP2fig} shows an example of this constuction.

\begin{figure}[h!] 
\centering
\includegraphics{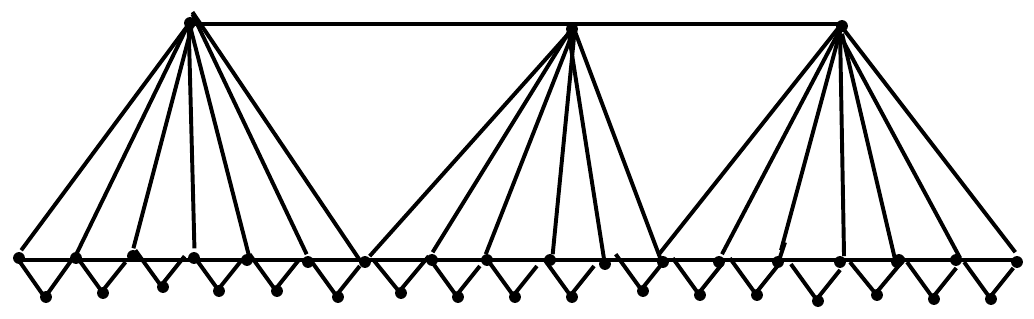} 
\caption{The above construction for $p=3$} \label{MOP2fig}

\end{figure}

\noindent
\textbf{\textit{Some final remarks.}} It might be useful to try and see at this point where applying Theorem \ref{lbsp} does not work even for trees and maximal outerplanar graphs. Let us elaborate on the simple observation we made in the introduction to this section. For trees, the phenomenon we described occurs  for $k=0$ because the proof of the Theorem would require degrees 1, 2, 3 with equal classes, but already for $k=2$ with degrees 1 and 4 in equal classes the average degree would be 3 which is impossible for trees.  Hence for $k\geq 2$ it all works out, with sharpness coming from $t=1$ in (4) giving trees of degrees 1 and $k+2$. 
	
And again, for maximal outerplanar graphs, for $k=2$ we should have degrees 2, 5, 8 with equal classes which will only give $n/3$ and $d=5$, but this is too large as $d=4$ for maximal outerplanars. So letting $b=0$ in the proof of Theorem \ref{lbsp}, which anyway would give $3n/7$, would force two big equal classes and the remainder. Using (4) of Theorem \ref{lbsp} with $t=2$ gives $4n/9$ which would be possible if we could find maximal outerplanars with 4n/9 vertices of degree 2 and degrees 5 and $n/9$ vertices of degree 8, but we could not find such constructions yet.

\section{Conclusion} \label{sec:conclusion}

The results presented in this paper naturally lead to an unanswered question and to the most likely next class for which one can investigate whether the spread  attains the bounds of Theorem \ref{lbsp}.

The obvious unanswered question is determining the best lower bound for $MOP(n,2)$, that is, the minimum spread $sp(G,2)$ among all maximal outerplanar graphs on $n$ vertices. 
We know, by the general lower bound given by bound (4), that $sp(G,2)$ is at least $4n/9$. 
While for the other spreads we considered in Section \ref{families} we could get close to the  bound given by (4) up to small additive constants, for $MOP(n,2)$ the family of outerplanar graphs $G$ on $n$ vertices with lowest value for $sp(G,2)$ which we could find gave $sp(G,2)$ approaching $5n/11$.  
\medskip 

\noindent
\emph{Problem 1}: Determine the correct order of magnitude of $MOP(n,2)$.

\medskip

One can  also consider maximal planar graphs.  We define \[MP(n,\delta,k)  =  \min \{ sp(G,k) : G \mbox{ \footnotesize{is maximal planar graph on $n$ vertices and minimum degree $\delta$}}\}.\]

\noindent
\emph{Problem 2}:  Determine $MP(n,\delta,k)$ for $\delta = 3 ,4 ,5$ and  $k \geq 0$.



\bibliographystyle{plain}

\bibliography{spreadfinal}

\end{document}